\documentclass[11pt]{article}

\usepackage{epsfig,amsmath,latexsym,amssymb}
\usepackage{graphicx}
\usepackage{lscape}
\usepackage{picture, eso-pic, tikz} 
\usepackage{dsfont}
\usepackage{listings}
\usepackage{changes}
\usepackage{hyperref}
\usepackage{bbding}
\usepackage{tikz-cd}
\usepackage{mathtools}

\renewcommand{\baselinestretch}{1.1}
\def\R{{\mathbb R}}  
\def\p{{\mathbb P}}  
\def\E{{\mathbb E}}  
\def\V{{\mathbb V}}  
\def\Beweis{\footnotesize}
\DeclareMathOperator*{\argmin}{arg\,min}

\newcommand{\Remm}[1]{}
\newtheorem{theo}{Theorem}[section]

\newtheorem{cor}[theo]{Corollary}
\newtheorem{defi}[theo]{Definition}
\newtheorem{model ass}[theo]{Model Assumptions}

\def\EndProof{\hfill {\scriptsize $\Box$}}

\numberwithin{equation}{section}

\definecolor{MyGray}{rgb}{0.92,0.92,0.92}


\definecolor{British racing}{rgb}{0.0, 0.5, 0.0}

\def\be{\boldsymbol{e}}

\def\bX{\boldsymbol{X}}

\def\bm{\boldsymbol{m}}

\def\bbeta{\boldsymbol{\beta}}

\def\b0{\boldsymbol{0}}

\def\bu{\boldsymbol{u}}

\newcommand{\tr}{\operatorname{trace}}

\usepackage{todonotes}
\newcommand{\Comments}{1}
\newcommand{\mynote}[2]{\ifnum\Comments=1\textcolor{#1}{#2}\fi}
\newcommand{\mytodo}[2]{\ifnum\Comments=1%
  \todo[linecolor=#1!80!black,backgroundcolor=#1,bordercolor=#1!80!black]{#2}\fi}

\ifnum\Comments=1               
\fi

\ifnum\Comments=1               
\fi

\ifnum\Comments=1               
\fi

\begin{document}
\author{
Mario V.~W{\"u}thrich\footnote{\normalfont Department of Mathematics, ETH Zurich; mario.wuethrich@math.ethz.ch}}

\date{\today}
\title{{\sc The Balance Property: The Constrained Case,\\ with a View on Risk Sharing}}
\maketitle

\begin{abstract}
\noindent  
The balance property is an important property of fitted statistical models deployed for insurance pricing. It guarantees that the total actuarial price in the fitted model is equal to the totally observed loss used to fit the model. This can be seen as an in-sample global unbiasedness property. Maximum likelihood fitted generalized linear models (GLMs) with canonical links automatically fulfill the balance property. Lindholm--W\"uthrich (Scandinavian Actuarial Journal, 2026) discussed two popular balance correction methods in case the balance property fails to hold. This note extends this discussion with a third method,  constrained GLM fitting, that turns out to be superior over the two previously discussed ones. Moreover, we highlight the connection between  the balance property and ex-post risk sharing rules.

\bigskip

\noindent
{\bf Keywords.} Unbiasedness, balance property, 
actuarial pricing, generalized linear model, GLM, regularization, risk sharing, peer-to-peer insurance.
\end{abstract}

\section{Introduction}
\label{sec: Introduction}
B\"uhlmann--Gisler \cite{BG} recognized the importance of the {\it balance property} in insurance pricing. The balance property ensures that the global price level is correctly specified. 
B\"uhlmann--Gisler \cite[Theorem 4.5]{BG} proved that the homogeneous B\"uhlmann--Straub \cite{BS} credibility estimator fulfills the balance property. For maximum likelihood estimated (MLE) generalized linear models (GLMs) with canonical links this property was verified to hold by Nelder--Wedderburn \cite[formula (10)]{Nelder}. These results have also found their way into neural network fitting to rectify the balance property; see W\"uthrich \cite{Wbalance}. However, these GLM and network balance property results only hold under the canonical link choice.

Under non-canonical link GLMs, a balance correction step is necessary to rectify the balance property. Lindholm--W\"uthrich \cite{LW} studied the two most popular correction methods in industry practice, a simple post-processing (SPP) method that simply shifts the intercept parameter of the GLM correspondingly, and a quasi maximum likelihood estimation (QMLE) method that fits the model under a ``wrong'' distributional assumption enforcing the selected link to be the canonical one in the ``wrong'' model.
The conclusions of Lindholm--W\"uthrich \cite{LW} were that there is no general preference for one of the two methods in terms of model accuracy, but the choice depends on the specific modeling problem to be solved.

In Lindholm--W\"uthrich \cite[Section 3.3]{LW} there was a third proposal being based on constrained MLE. This third method was not further pursued because, first, it is not implemented in standard software tools and, second, it is slightly more demanding analytically. This omission was an oversight, and we should have solved the constrained MLE in the first place. Intuitively, the constrained case should be superior to the SPP method, because it selects the optimal solution in the identical parameter space as the SPP method, but the SPP method performs this in a decoupled two step fitting procedure that does not jointly optimize over both steps, thus, it should be worse than the constrained MLE case. Intuitively, the constrained case should also outperform the QMLE method, as it solves the same problem but under the correct data generating model -- solving the same problem needs to be interpreted in the sense that the regularization term is not binding in the QMLE because the side constraint is automatically satisfied, and working in the correct data generating model is superior as proved in Gourieroux et al.~\cite{GourierouxMontfortTrogon}.

This paper proves that indeed the constrained MLE case is superior over the SPP and the QMLE methods. Henceforth, it should generally be preferred in practice; see Theorems \ref{theorem SC1} and \ref{theo main result}. These two theorems were proved with ChatGPT, and since these results are of high practical relevance in industry, we decided to make them available through this non-peer-reviewed manuscript.

Our final section highlights the close connection between the constrained MLE problem and risk sharing pools. Risk sharing pools may be viewed as a form of peer-to-peer insurance in which the aggregate loss is allocated ex-post among the pool participants. From this perspective, it can be interpreted as a total claim allocation mechanism; see Denuit--Dhaene \cite{DD}. Naturally, such a sharing mechanism must satisfy the aforementioned balance property.

\medskip

{\bf Disclaimer.}
The proofs of  Theorems  \ref{theorem SC1} and \ref{theo main result} were generated by ChatGPT and subsequently reviewed and refined by the author. No GenAI tools were used in the writing of the remainder of this manuscript.

\section{The balance property}
\label{The balance property within generalized linear models}

Consider the tuple $(Y,\bX,W) \sim \p$, with $Y$ being a real-valued response (insurance claim), $\bX$ describing the $(q+1)$-dimensional real-valued covariates, and $W$ being a strictly positive case weight. $\p$ corresponds to the portfolio distribution of a given insurer.
Actuarial pricing of response $Y$, given $(\bX, W)$, is based on the regression function
\begin{equation}\label{def: best-estimate 0}
(\bX, W)~\mapsto ~ \mu(\bX) = \E \left[\left. Y\right|\bX, W \right] \qquad \text{ a.s.}
\end{equation}
This mapping \eqref{def: best-estimate 0} involves the implicit assumption that the conditionally expected response $Y$ does not depend on $W$, given $\bX$, meaning that $Y$ considers the weight-scaled quantity. This is a common model assumption that is fulfilled, e.g., within the exponential dispersion family (EDF); see W\"uthrich--Merz \cite{WM2023}. This assumption also implies that we can drop the conditioning on $W$ in \eqref{def: best-estimate 0} because the actuarial price \eqref{def: best-estimate 0} for $Y$ does not have a sensitivity in $W$.

Typically, the true regression function $\mu(\cdot)$ is unknown and needs to be estimated from an i.i.d.~learning sample ${\cal D}=(Y_i,\bX_i, W_i)_{i=1}^n$ that follows the same law as $(Y,\bX, W)$. This procedure provides us with an estimated regression function $\widehat{\mu}(\cdot)$ that serves as an approximation to the actuarial price \eqref{def: best-estimate 0}. 
Note that the estimated regression function $\widehat{\mu}(\cdot)=\widehat{\mu}_{\cal D}(\cdot)$ generally depends on the realization of the learning sample ${\cal D}$.

\begin{defi}[balance property]  \label{definition balance property}
The balance property holds for a regression fitting procedure
if for $\p$-a.e.~realization of the learning sample ${\cal D}=(Y_i,\bX_i, W_i)_{i=1}^n$, the resulting estimated regression function $\widehat{\mu}(\cdot)=\widehat{\mu}_{\cal D}(\cdot)$ fulfills 
\begin{equation}\label{balance property formula}
\sum_{i=1}^n W_i \,\widehat{\mu}(\bX_i) = 
\sum_{i=1}^n W_i\,Y_i.
\end{equation}
\end{defi}
This is an in-sample property that is straightforward to verify, and which essentially says that we have (in-sample) global unbiasedness.

\section{Generalized linear models and canonical links}
\label{Generalized linear models}
For ease of notation, we assume throughout the document that the initial component of $\bX$ is equal to one, i.e., the covariates take the values
\begin{equation*}
  \bX=(X_0,X_1,\ldots, X_q)^\top=(1,X_1,\ldots, X_q)^\top~\in~ \{1\} \times \R^q.
\end{equation*}
Assuming that the true regression function $\mu(\cdot)$ follows a GLM, we set the GLM model equation
\begin{equation}\label{GLM assumption}
\bX ~\mapsto ~ g(\mu(\bX;\bbeta^+))= \beta^+_0 + \sum_{j=1}^q 
\beta^+_j X_j =: \left\langle \bbeta^+ , \bX \right\rangle,
\end{equation}
with strictly increasing and smooth link function $g(\cdot)$ and true regression parameter $\bbeta^+=(\beta^+_0, \ldots, \beta^+_q)^\top\in \R^{q+1}$.

If we furthermore assume that the response $Y$ follows a member of the EDF with cumulant function $\kappa$, given $(\bX, W)$, providing us with a conditional mean having GLM structure \eqref{GLM assumption}, then we can estimate the regression parameter $\bbeta \in \R^{q+1}$ with MLE; we refer to W\"uthrich--Merz \cite[Chapter 5]{WM2023} for technical details and the following lines give a short summary.\footnote{We generally assume that the selected member of the EDF has a steep cumulant function $\kappa$. This aligns the mean parameter space with the support of the responses. The members of the EDF used in practice have steep cumulant functions; see W\"uthrich--Merz \cite[Section 2.2.4]{WM2023}.}

The {\it canonical link} of the selected EDF with cumulant function $\kappa$ is given by $h=(\kappa')^{-1}$. This allows us to compute the deviance loss function 
\begin{equation}\label{deviance loss}
L_\kappa(y,m) = y h(y) - \kappa\left(h(y)\right)
-y  h(m)
+\kappa\left(h(m)\right).
\end{equation}
The MLE of regression parameter $\bbeta$ for given learning sample ${\cal D}=(Y_i,\bX_i, W_i)_{i=1}^n$
is found by solving
\begin{equation}\label{MLE 2}
\widehat{\bbeta} 
\,\in \, \argmin_{\bbeta \in \R^{q+1}}\,
\frac{1}{n} \sum_{i=1}^n \frac{W_i}{\varphi}\,L_\kappa\left(Y_i,\mu(\bX_i;\bbeta)\right).
\end{equation}
This provides us with the MLE fitted regression function
\begin{equation}\label{def: best-estimate estimate}
\bX ~\mapsto ~ \mu(\bX;\widehat{\bbeta})=g^{-1} \left\langle \widehat{\bbeta} , \bX \right\rangle.
\end{equation}

The following result has been given in Nelder--Wedderburn \cite[formula (10)]{Nelder}.
\begin{theo}\label{canonical link}
  Consider the MLE fitted GLM regression function \eqref{def: best-estimate estimate} under the canonical link choice $g=h=(\kappa')^{-1}$ within the chosen EDF with steep cumulant function $\kappa$. The MLE regression fitting procedure \eqref{MLE 2} fulfills the balance property \eqref{balance property formula}.
\end{theo}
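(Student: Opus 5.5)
The plan is to read the balance property off the first-order condition of \eqref{MLE 2} in the intercept direction $\beta_0$, using that $X_{i,0}=1$. The canonical link is exactly what makes this condition linear in the fitted means. Throughout, I write $\mu_i(\bbeta)=\mu(\bX_i;\bbeta)=g^{-1}\langle \bbeta,\bX_i\rangle$.

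First, I compute the derivative of the deviance loss \eqref{deviance loss} in its second argument. Since $\kappa'(h(m))=m$, we get
\begin{equation*}
\frac{\partial}{\partial m}L_\kappa(y,m) = -y\,h'(m)+\kappa'(h(m))\,h'(m) = (m-y)\,h'(m).
\end{equation*}
Under the canonical link $g=h$, the chain rule gives
\begin{equation*}
\frac{\partial}{\partial \bbeta}\,\mu_i(\bbeta)=\frac{\bX_i}{g'(\mu_i(\bbeta))}=\frac{\bX_i}{h'(\mu_i(\bbeta))}.
\end{equation*}
Combining the two, the factors $h'(\mu_i)$ cancel, and the gradient of the objective in \eqref{MLE 2} becomes
\begin{equation*}
\frac{1}{n\varphi}\sum_{i=1}^n W_i\,\bigl(\mu_i(\bbeta)-Y_i\bigr)\,\bX_i .
\end{equation*}

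Second, I argue that the minimizer $\widehat{\bbeta}$ is an interior critical point. The parameter space $\R^{q+1}$ is open and the objective is differentiable, so this holds provided a minimizer exists, which the statement takes for granted through ``$\in \argmin$''. The gradient therefore vanishes at $\widehat{\bbeta}$. Its $0$-th component, with $X_{i,0}=1$, reads
\begin{equation*}
\sum_{i=1}^n W_i\,\mu(\bX_i;\widehat{\bbeta})=\sum_{i=1}^n W_i\,Y_i,
\end{equation*}
which is \eqref{balance property formula}. Since the learning sample was arbitrary (for $\p$-a.e.\ realization for which the MLE exists), this proves the claim.

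The main obstacle is a technical point rather than the algebra. We must make sure that $h'(\mu_i)$ is finite and nonzero, and that $\mu_i$ lies in the interior of the mean parameter space, so that the cancellation and the differentiation are legitimate. Steepness of $\kappa$ guarantees that $h=(\kappa')^{-1}$ is smooth and strictly increasing on the interior of the mean space, with $h'=1/\kappa''(h)>0$. The fitted means $g^{-1}\langle\bbeta,\bX_i\rangle$ lie in that interior by construction. If a boundary response such as $Y_i=0$ makes $yh(y)-\kappa(h(y))$ degenerate, I will interpret that term by its continuous limit. This term does not depend on $\bbeta$, so it does not affect the gradient.
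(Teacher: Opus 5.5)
Your proof is correct and is the standard argument behind the paper's citation of Nelder--Wedderburn: under $g=h$ one has $V(\mu)\,g'(\mu)=\kappa''(h(\mu))\,h'(\mu)=1$, so the general score \eqref{MLE general} collapses to $\frac{1}{n}\sum_{i=1}^n \frac{W_i}{\varphi}(\mu(\bX_i;\bbeta)-Y_i)\bX_i=\boldsymbol{0}$, and its intercept component is \eqref{balance property formula}. Two cosmetic remarks. First, $h'>0$ comes from $\kappa''>0$ (non-degeneracy of the EDF), not from steepness. Second, for canonical links such as the gamma one the admissible $\bbeta$ form an open proper subset of $\R^{q+1}$ rather than all of it, and your interior-critical-point argument covers this case equally well.
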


Thus, under the canonical link choice $g=h$, MLE within GLMs fulfills the balance property, and otherwise generally not. In the general link case $g \neq h$, the MLE score equations read as
\begin{eqnarray}\nonumber
\Psi(\bbeta)&:=&
\nabla_{\bbeta}\left[
\frac{1}{n} \sum_{i=1}^n \frac{W_i}{\varphi}\, L_\kappa\left(Y_i,\mu(\bX_i;\bbeta)\right)\right]
  \\&=&\label{MLE general}
        \frac{1}{n} \sum_{i=1}^n \frac{W_i}{\varphi}\,
\frac{\mu(\bX_i;\bbeta)-Y_i}{V(\mu(\bX_i;\bbeta))}\,
\frac{1}{g'(\mu(\bX_i;\bbeta))}\, \bX_i~=~\b0,
\end{eqnarray}
with variance function $m\mapsto V(m)=\kappa''(h(m))$.
The MLE $\widehat{\bbeta}$ of $\bbeta$ is found by analyzing all solutions of \eqref{MLE general} and, in general, the balance property will fail to hold for this MLE solution under a general link choice $g\neq h$. This was precisely the motivation of Lindholm--W\"uthrich \cite{LW} to present balance correction methods to rectify this balance property.

\section{Rectifying the balance property}
Under non-canonical links $g\neq h$, the balance property for MLE fitted GLMs generally fails to hold, and Lindholm--W\"uthrich \cite{LW} proposed the following balance correction methods:
\begin{itemize}
\item Quasi  maximum likelihood estimation (QMLE) method
\item Simple post-processing (SPP) method -- probably the industry standard
\item Optimization under side constraint (SC) method
\end{itemize}
On purpose, we change the order of the methods compared to Lindholm--W\"uthrich \cite{LW}, because this will be more convenient didactically.

\medskip

In the remainder of this document, we are going to work under the technical assumptions given in Gourieroux et al.~\cite[Appendix 1]{GourierouxMontfortTrogon}, and we also refer to the related literature
Gallant--Holly \cite{GallantHolly},
Burguete et al.~\cite{Burguete} and White \cite{White} on asymptotic behavior of M- and Z-estimators. An estimate $\widehat{\bbeta}$ is based on an i.i.d.~learning sample ${\cal D}=(Y_i,\bX_i, W_i)_{i=1}^n$ of sample size $n$. Whenever the sample size is important, e.g., for asymptotic properties, we add a lower index $\widehat{\bbeta}=\widehat{\bbeta}_n$.

\subsection{Quasi maximum likelihood estimation}
\label{sec: Quasi log-likelihood}
The selected EDF distribution with steep cumulant function $\kappa$ gives us the deviance loss $L_\kappa$, see \eqref{deviance loss}. Moreover, assume a GLM structure  \eqref{GLM assumption} for the mean with a smooth and strictly increasing link function $g\neq h$. This gives us the score equations \eqref{MLE general}, and the balance property fails to hold because the link choice is not aligned with the cumulant function $\kappa$ of the selected EDF of $Y$, given $(\bX,W)$.

A crucial observation at this stage is that the data generating mechanism of $(Y,\bX,W)$ is completely irrelevant in Theorem \ref{canonical link}, but this theorem is solely a statement about the estimation procedure. This estimation procedure is based on MLE within the EDF, and this can be applied to {\it any} learning sample ${\cal D}$, regardless of how it was generated  -- this is precisely the part that is tweaked to rectify the balance property with QMLE. Assume that the responses $Y$, given $(\bX,W)$, satisfy the GLM assumption \eqref{GLM assumption} and the following conditional variance property, respectively,
\begin{equation}\label{first moment assumptions}
  \E \left[\left. Y \right| \bX, W  \right] = 
  \mu(\bX;\bbeta^+)=
  g^{-1} \left\langle \bbeta^+, \bX \right\rangle
  \qquad \text{ and } \qquad 
  \V \left(\left. Y \right| \bX, W  \right) = \frac{\varphi}{W} \,\sigma^2(\bX),
\end{equation}
for a given strictly positive function $\sigma^2(\cdot)$. These properties \eqref{first moment assumptions} do not require that we have an EDF distributed response $Y$, given $(\bX,W)$.

\medskip

Assume that the selected link $g$ allows for an EDF distribution choice with steep cumulant function $\kappa_g$ such that its canonical link satisfies $g=(\kappa'_g)^{-1}$. Solve the QMLE problem
\begin{equation}\label{in-processing step}
\widehat{\bbeta}^{\rm QMLE} \,\in \, \argmin_{\bbeta}\,
\frac{1}{n} \sum_{i=1}^n \frac{W_i}{\varphi}\,L_{\kappa_g}\left(Y_i,\mu(\bX_i;\bbeta)\right).
\end{equation}
An easy consequence of Theorem \ref{canonical link} is the following corollary.
\begin{cor}\label{canonical link cor}
The QMLE fitted model \eqref{in-processing step} fulfills the balance property \eqref{balance property formula}.
\end{cor}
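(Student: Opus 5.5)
The plan is to reduce the corollary directly to Theorem \ref{canonical link}. The key observation is that the QMLE problem \eqref{in-processing step} is exactly an MLE problem of the form \eqref{MLE 2}, with the cumulant function $\kappa$ replaced by $\kappa_g$. Theorem \ref{canonical link} is a statement purely about this estimation procedure applied to the sample ${\cal D}$; as stressed before the corollary, it does not use the data generating mechanism. So it applies regardless of whether $Y$, given $(\bX,W)$, actually follows the EDF with cumulant function $\kappa_g$.

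Concretely, I would proceed in three steps.

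First, by assumption $\kappa_g$ is steep and its canonical link is $(\kappa_g')^{-1}=g$. Hence the GLM \eqref{GLM assumption} with link $g$, fitted by minimizing the $\kappa_g$-deviance, is a canonical-link GLM in the sense of Theorem \ref{canonical link}.

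Second, I would make explicit why the theorem's conclusion holds for an arbitrary sample. Specialize the score equations \eqref{MLE general} to $\kappa_g$. Its variance function is $V_g(m)=\kappa_g''(g(m))$, and differentiating $\kappa_g'(g(m))=m$ gives $g'(m)=1/\kappa_g''(g(m))=1/V_g(m)$. Therefore $V_g(\mu)\,g'(\mu)=1$ cancels in \eqref{MLE general}, and the score reduces to
\begin{equation*}
\frac{1}{n}\sum_{i=1}^n \frac{W_i}{\varphi}\,\bigl(\mu(\bX_i;\widehat{\bbeta}^{\rm QMLE})-Y_i\bigr)\,\bX_i=\b0.
\end{equation*}

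Third, I would read off the component $j=0$ of this identity. Since $X_{i,0}=1$ for all $i$, multiplying by $n\varphi$ gives exactly the balance property \eqref{balance property formula}.

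The only step needing care is that $\widehat{\bbeta}^{\rm QMLE}$ actually satisfies the first-order condition, i.e.\ that the argmin exists and is an interior critical point. Here I would rely on the standing technical assumptions of Gourieroux et al.\ together with steepness of $\kappa_g$, which aligns the mean space with the support of the $Y_i$. The parameter space is all of $\R^{q+1}$, so any minimizer is a stationary point of a smooth objective. Steepness also ensures the deviance $L_{\kappa_g}(Y_i,\cdot)$ is well defined on the observed responses, which is the same condition under which Theorem \ref{canonical link} is invoked. This holds for $\p$-a.e.\ realization of ${\cal D}$, which completes the argument.
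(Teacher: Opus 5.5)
Your proposal is correct and follows the paper's route: the paper obtains the corollary directly from Theorem \ref{canonical link} applied to the $\kappa_g$-deviance, using that this theorem concerns only the fitting procedure and not the law generating the data, and your explicit cancellation $V_g(\mu)\,g'(\mu)=1$ in \eqref{MLE general} followed by reading off the intercept component simply unpacks that theorem. One minor refinement: steepness of $\kappa_g$ and the Gourieroux et al.\ assumptions are not what secure the first-order condition, since $Y$ need not follow the $\kappa_g$-EDF here and so steepness says nothing about where the $Y_i$ lie; also, the $\bbeta$-domain is in general only the open set where every $\langle \bbeta,\bX_i\rangle$ lies in the interior of the canonical parameter space, rather than all of $\R^{q+1}$, but openness together with smoothness of the $\bbeta$-dependent part $-Y_i\langle\bbeta,\bX_i\rangle+\kappa_g(\langle\bbeta,\bX_i\rangle)$ is all you need for any existing minimizer to be stationary.
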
  

Moreover, there is the following theorem.

\begin{theo}[Gourieroux et al.~{\cite[Theorem 1]{GourierouxMontfortTrogon}}]
  \label{theorem 1}
Assume \eqref{first moment assumptions}. 
  The QMLE is strongly consistent, that is, $\widehat{\bbeta}^{\rm QMLE}_n$ converges to $\bbeta^+$,
$\p$-a.s., for $n\to \infty$.
\end{theo}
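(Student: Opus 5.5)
The plan is the standard M-estimator consistency argument (Wald/Jennrich type), applied to the quasi-likelihood criterion under the regularity conditions of Gourieroux et al.~\cite[Appendix 1]{GourierouxMontfortTrogon}. Since $\kappa_g$ is chosen so that $g$ is its canonical link, write $\theta = \langle \bbeta, \bX\rangle = g(\mu(\bX;\bbeta))$. The deviance \eqref{deviance loss} then reads
\[
L_{\kappa_g}(y,\mu(\bX;\bbeta)) = c(y) - y\,\langle \bbeta,\bX\rangle + \kappa_g(\langle \bbeta,\bX\rangle),
\]
with $c(y)$ not depending on $\bbeta$. Define the empirical criterion
\[
Q_n(\bbeta)=\frac1n\sum_{i=1}^n \frac{W_i}{\varphi}\Bigl(\kappa_g(\langle\bbeta,\bX_i\rangle)-Y_i\langle\bbeta,\bX_i\rangle\Bigr),
\]
whose minimizer is $\widehat{\bbeta}^{\rm QMLE}_n$. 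Its population counterpart is
\[
Q(\bbeta)=\E\Bigl[\tfrac{W}{\varphi}\bigl(\kappa_g(\langle\bbeta,\bX\rangle)-Y\langle\bbeta,\bX\rangle\bigr)\Bigr].
\]

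\textbf{Step 1: identification.} Condition on $(\bX,W)$ and use only the first moment in \eqref{first moment assumptions}. This gives
\[
Q(\bbeta)=\E\Bigl[\tfrac{W}{\varphi}\bigl(\kappa_g(\langle\bbeta,\bX\rangle)-g^{-1}(\langle\bbeta^+,\bX\rangle)\langle\bbeta,\bX\rangle\bigr)\Bigr].
\]
For fixed $m=g^{-1}(\theta^+)$, the map $\theta\mapsto\kappa_g(\theta)-m\theta$ is strictly convex, since $\kappa_g''>0$. Its derivative $\kappa_g'(\theta)-m = g^{-1}(\theta)-g^{-1}(\theta^+)$ vanishes only at $\theta=\theta^+$. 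Hence the integrand is pointwise minimized at $\langle\bbeta,\bX\rangle=\langle\bbeta^+,\bX\rangle$, so $Q(\bbeta)\ge Q(\bbeta^+)$. Equality forces $\langle\bbeta-\bbeta^+,\bX\rangle=0$ a.s. Under the full-rank assumption that $\E[W\bX\bX^\top]$ is positive definite, this yields $\bbeta=\bbeta^+$, so $\bbeta^+$ is the unique minimizer of $Q$. This is where the variance part of \eqref{first moment assumptions} and the true distribution of $Y$ play no role.

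\textbf{Step 2: uniform convergence.} Use the strong law of large numbers together with a uniform version on compacts. Each summand is continuous in $\bbeta$ and dominated on a compact set $K$ by an integrable envelope (moment conditions on $W$, $Y$, $\bX$ and local boundedness of $\kappa_g$ from the appendix). The Jennrich/Le Cam uniform strong law then gives $\sup_{\bbeta\in K}|Q_n-Q|\to0$ a.s.

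\textbf{Step 3: convergence of the minimizer.} Combine Steps 1 and 2 with the standard argmin argument. For any $\varepsilon>0$, set $\delta=\inf_{K\setminus B_\varepsilon(\bbeta^+)}Q - Q(\bbeta^+)>0$. Eventually $Q_n$ on $K\setminus B_\varepsilon$ exceeds $Q_n(\bbeta^+)$, so every minimizer lies in $B_\varepsilon(\bbeta^+)$.

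\textbf{Main obstacle.} I expect the hardest step to be the non-compactness of $\R^{q+1}$, that is, ensuring that $\widehat{\bbeta}_n$ eventually lies in a compact set. I would exploit convexity: $Q_n$ is convex in $\bbeta$, because it is a composition of the convex $\kappa_g$ with a linear map plus a linear term. For convex criteria, pointwise a.s. convergence on a countable dense set implies uniform convergence on compacts (Andersen--Gill / Rockafellar). Moreover, since $Q$ has a unique minimizer, the minimizer of $Q_n$ cannot escape: on the sphere $\partial B_\varepsilon(\bbeta^+)$ we eventually have $Q_n>Q_n(\bbeta^+)$, and convexity propagates this outside the ball. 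If $\kappa_g$ is not globally defined, one would instead invoke the compact parameter space assumption of Gourieroux et al.~\cite{GourierouxMontfortTrogon} directly.
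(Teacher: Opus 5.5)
Your argument is correct. The paper gives no proof of its own here; it imports the result from Gourieroux et al.\ under their Appendix~1 conditions. Your Steps~1--3 follow the route of that source. Identification comes from the linear-exponential-family (Kullback-type) inequality, which in canonical coordinates is exactly your strict convexity of $\theta\mapsto\kappa_g(\theta)-m\theta$ with $m=g^{-1}(\theta^+)$, and it uses only the correctly specified conditional mean. This is followed by a uniform strong law on a compact parameter set and the argmin argument. Your rank condition is already implied by the paper's standing assumption that ${\cal H}$ is positive definite: its weights are strictly positive, so $\langle\boldsymbol{v},\bX\rangle=0$ a.s.\ forces $\boldsymbol{v}=\b0$.

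The genuine difference is how you handle non-compactness. Gourieroux et al.\ allow arbitrary nonlinear mean specifications $\mu(\bX;\bbeta)$, for which the criterion need not be convex, so they work, as is standard in that literature, on a compact parameter set. In the present GLM setting, $g$ being the canonical link of $\kappa_g$ makes $Q_n$ convex in $\bbeta$. Your convexity lemma therefore applies: pointwise a.s.\ convergence on a countable dense set gives uniform convergence on $\partial B_\varepsilon(\bbeta^+)$, and convexity propagates the bound along segments from $\bbeta^+$. This lets you work on all of $\R^{q+1}$, and as a by-product it yields existence of a minimizer for all large $n$, which the statement tacitly presupposes. The price is that $Q$ must be finite on a neighborhood of $\bbeta^+$. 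Your Step~2 envelope conditions on a small ball supply this when $\kappa_g$ is finite on $\R$, e.g.\ the log link with Poisson $\kappa_g=\exp$, the main practical case; otherwise, as you say, one falls back on compactness.
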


Next, we determine the speed of convergence of this QMLE. Assume that the conditional response fulfills \eqref{first moment assumptions} with a general link function $g$. Moreover, select an EDF distribution with cumulant function $\kappa$ providing the canonical link $h$ and the variance function $m\mapsto V(m)=\kappa''(h(m))$. We introduce the following matrices (assuming they exist and have full rank)
\begin{eqnarray}
{\cal J}&=&\E \left[
    \frac{W}{\varphi}\, \frac{1}{V(\mu(\bX;\bbeta^+))}\,
\left(\frac{1}{g'(\mu(\bX;\bbeta^+))}\right)^2 \bX\bX^\top
     \right],   \label{definition J}
 \\
 {\cal I}  &=&\E \left[
    \frac{W}{\varphi} \frac{\sigma^2(\bX)}{V(\mu(\bX;\bbeta^+))^2}\,
\left(\frac{1}{g'(\mu(\bX;\bbeta^+))}\right)^2 \bX\bX^\top
     \right],
     \label{definition I}
\\\label{definition H}
{\cal H}&=&
\E \left[
    \frac{W}{\varphi}\, \frac{1}{\sigma^2(\bX)}\,
\left(\frac{1}{g'(\mu(\bX;\bbeta^+))}\right)^2 \bX\bX^\top
     \right].            
\end{eqnarray}
We have the following asymptotic normality result for the MLE \eqref{MLE 2}.

\begin{theo}[Gourieroux et al.~{\cite[Theorem 3 and Property 5]{GourierouxMontfortTrogon}}]
  \label{theorem 2} Assume \eqref{first moment assumptions}. 
  The MLE \eqref{MLE 2} is asymptotically normal with
  \begin{equation*}
    \sqrt{n} \left(\widehat{\bbeta}_n- \bbeta^+\right)~ \Longrightarrow~ {\cal N}\left(\b0, \Sigma \right) \qquad \text{ for $n\to \infty$,}
  \end{equation*}
  with covariance matrix $\Sigma={\cal J}^{-1}{\cal I}
      {\cal J}^{-1}$.
We have the lower bound
\begin{equation}\label{asymptotic best}
\Sigma={\cal J}^{-1} {\cal I} {\cal J}^{-1}
~\ge~ 
{\cal H}^{-1},
   \end{equation}
the inequality refers to positive semi-definiteness.
\end{theo}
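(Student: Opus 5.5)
The plan splits Theorem \ref{theorem 2} into two parts: the asymptotic normality with sandwich covariance, and the matrix lower bound \eqref{asymptotic best}.

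\emph{Asymptotic normality.} I treat $\widehat{\bbeta}_n$ as a Z-estimator solving $\Psi_n(\widehat{\bbeta}_n)=\b0$, with $\Psi_n=\Psi$ from \eqref{MLE general}. Consistency comes as in Theorem \ref{theorem 1}: under \eqref{first moment assumptions}, $\E[\Psi_n(\bbeta^+)]=\b0$, because each summand has conditional mean zero given $(\bX,W)$. A first-order Taylor expansion around $\bbeta^+$ gives
\begin{equation*}
\b0=\sqrt{n}\,\Psi_n(\bbeta^+)+\nabla_{\bbeta}\Psi_n(\bar{\bbeta}_n)\,\sqrt{n}\,(\widehat{\bbeta}_n-\bbeta^+),
\end{equation*}
with $\bar{\bbeta}_n$ on the segment between $\widehat{\bbeta}_n$ and $\bbeta^+$.

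For the first term I apply the multivariate CLT to the i.i.d.\ summands. Conditioning on $(\bX,W)$ and using the variance in \eqref{first moment assumptions}, the second moment of a summand is $(W/\varphi)^2(\varphi/W)\sigma^2(\bX)V^{-2}(g')^{-2}\bX\bX^\top\varphi^{-1}\cdot\varphi$. I will keep track of the factors $\varphi$, but the result should be ${\cal I}$ up to the normalization chosen in \eqref{definition I}.

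For the Hessian, I differentiate the summand. Every term in which the derivative falls on $V^{-1}(g')^{-1}$ is still multiplied by $(\mu-Y)$, so it has conditional mean zero. The only surviving term comes from $\nabla_{\bbeta}\mu(\bX;\bbeta)=\bX/g'(\mu)$, which gives ${\cal J}$ in the limit. To pass from $\bar{\bbeta}_n$ to $\bbeta^+$ I need a uniform law of large numbers on a neighbourhood of $\bbeta^+$; this holds under the regularity conditions of Gourieroux et al.\ (dominated derivatives). Slutsky's lemma then yields $\sqrt{n}(\widehat{\bbeta}_n-\bbeta^+)\Rightarrow{\cal N}(\b0,{\cal J}^{-1}{\cal I}{\cal J}^{-1})$. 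The one thing to check is that $\sigma^2$ and $\varphi$ enter ${\cal I}$ and ${\cal J}$ consistently with their definitions.

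\emph{Lower bound.} This is the main step, and I would prove it by a Cauchy--Schwarz argument on random vectors. Set
\begin{equation*}
A=\sqrt{W/\varphi}\,\frac{\sigma(\bX)}{V(\mu)\,g'(\mu)}\,\bX,\qquad
B=\sqrt{W/\varphi}\,\frac{1}{\sigma(\bX)\,g'(\mu)}\,\bX,
\end{equation*}
both evaluated at $\mu=\mu(\bX;\bbeta^+)$. Then
\begin{equation*}
\E[AA^\top]={\cal I},\qquad \E[BB^\top]={\cal H},\qquad \E[AB^\top]={\cal J}.
\end{equation*}
Since the joint second-moment matrix of $(A,B)$ is positive semi-definite, its Schur complement gives ${\cal I}-{\cal J}{\cal H}^{-1}{\cal J}\ge 0$; equivalently, $\E\big[(A-{\cal J}{\cal H}^{-1}B)(A-{\cal J}{\cal H}^{-1}B)^\top\big]\ge0$. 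Multiplying on both sides by the symmetric matrix ${\cal J}^{-1}$ preserves semi-definiteness and gives ${\cal J}^{-1}{\cal I}{\cal J}^{-1}\ge{\cal H}^{-1}$, which is \eqref{asymptotic best}. This uses that ${\cal J}$ and ${\cal H}$ are symmetric and invertible, which is exactly the full-rank assumption. Equality holds when $A$ is proportional to $B$, i.e.\ when $V\propto\sigma^2$, the correctly specified variance case.

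I expect the lower bound to be short once these factorizations are in place. The real obstacle is the normality part: I need uniform convergence of the Hessian and have to keep the normalizations of ${\cal I}$ and ${\cal J}$ consistent, and for both I would rely on the technical assumptions of Gourieroux et al.\ \cite[Appendix 1]{GourierouxMontfortTrogon}.
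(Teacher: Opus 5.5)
Your proposal is correct and follows the standard argument behind the cited Gourieroux et al.\ Theorem~3 and Property~5, which the paper imports without proof: sandwich asymptotics for the Z-estimator, plus a matrix Cauchy--Schwarz bound whose Schur-complement form ${\cal I}-{\cal J}{\cal H}^{-1}{\cal J}\ge 0$ is exactly the ${\cal B}_g\ge 0$ the paper later invokes in the proof of Theorem~\ref{theo main result}. The normalizations you left to check come out exactly: a summand of $\sqrt{n}\,\Psi(\bbeta^+)$ has conditional covariance $\frac{W}{\varphi}\,\frac{\sigma^2(\bX)}{V(\mu)^2 g'(\mu)^2}\bX\bX^\top$, and the surviving Hessian term is $\frac{W}{\varphi}\,\frac{1}{V(\mu) g'(\mu)^2}\bX\bX^\top$, so you get ${\cal I}$ and ${\cal J}$ with no leftover factors of $\varphi$.
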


The main finding of Gourieroux et al.~\cite{GourierouxMontfortTrogon} is that the lower bound in Theorem \ref{theorem 2} is only attained if we work under the correct variance assumption in the estimation procedure
\begin{equation}\label{variance optimal}
\sigma^2(\bX)=
V(\mu(\bX;\bbeta^+))
=\kappa'' \left(h(\mu(\bX;\bbeta^+))\right).
\end{equation}
Thus, we need to select the cumulant function $\kappa$ such that \eqref{variance optimal} holds. This aligns the deviance loss $L_\kappa$ with the properties of the response, making the estimation procedure so-called {\it best asymptotically normal}. If at the same time the balance property should hold, this fully determines the possible link choice $g=h=(\kappa')^{-1}$. In any other variance case, the QMLE \eqref{in-processing step} will not be best asymptotically normal, i.e., will have a non-minimal asymptotic covariance matrix in the sense of \eqref{asymptotic best}.

\medskip

Applying these results to the QMLE works as follows. For the given link in \eqref{first moment assumptions}, select the cumulant function $\kappa_g$ with canonical link, i.e., $g=(\kappa_g')^{-1}$. This gives us the variance function  $m\mapsto V_g(m)=\kappa_g''(g(m))$. Inserting this variance function into \eqref{definition J} and \eqref{definition I} gives us ${\cal J}_g$ and ${\cal I}_g$, respectively. Using Theorem \ref{theorem 2} gives us the asymptotic normality
  \begin{equation}\label{asymptotic QMLE}
    \sqrt{n} \left(\widehat{\bbeta}^{\rm QMLE}_n- \bbeta^+\right)~ \Longrightarrow~ {\cal N}\left(\b0, \Sigma_{\rm QMLE} \right) \qquad \text{ for $n\to \infty$,}
  \end{equation}
  with asymptotic variance
\begin{equation*}
\Sigma_{\rm QMLE}={\cal J}_g^{-1} {\cal I}_g {\cal J}_g^{-1}~\ge~ {\cal H}^{-1}.
   \end{equation*}  
Best asymptotically normal is achieved if the canonical link $g$ of $\kappa_g$ provides the variance behavior \eqref{variance optimal} in terms of the true conditional variance of $Y$, given $(\bX, W)$, and in any other case we receive a bigger asymptotic covariance matrix -- though in any case the balance property is fulfilled in this QMLE approach \eqref{in-processing step}.

\subsection{Simple post-processing}
\label{Post-processing}
Starting from \eqref{MLE 2}, we apply a post-processing step by shifting the intercept estimate $\widehat{\beta}_0$ 
of the MLE $\widehat{\bbeta}$ by the (unique) constant $\gamma \in \R$ solving the identity
\begin{equation}\label{intercept shift}
\sum_{i=1}^n W_i \,\mu(\bX_i;\widehat{\bbeta}+\gamma \, \be_1) = 
\sum_{i=1}^n W_i \,g^{-1}\left\langle \widehat{\bbeta}+\gamma \, \be_1, \bX_i
\right\rangle=
\sum_{i=1}^n W_i\,Y_i,
\end{equation}
with first basis vector $\be_1=(1,0,\ldots, 0)^\top
\in \R^{q+1}$.
Denote the solution of \eqref{intercept shift} by $\widehat{\gamma}$. This
gives the SPP estimate
\begin{equation*}
\widehat{\bbeta}^{\rm SPP} =(\widehat{\beta}_0 + \widehat{\gamma }, 
\widehat{\beta}_1,\ldots, \widehat{\beta}_q)^\top~\in~\R^{q+1}.
\end{equation*}
We study the asymptotic behavior of this balance corrected estimator as the sample size $n$ goes to infinity. We introduce 
\begin{equation}\label{definition vector a}
\bm=
\E \left[\frac{W}{\varphi}\, \frac{1}{g'(\mu(\bX;\bbeta^+))}\, \bX\right]
\in \R^{q+1},
\qquad \tau^2=\E\left[\frac{W}{\varphi}\sigma^2(\bX)\right],
\qquad
 a=\bm^\top {\cal H}^{-1}\bm.
\end{equation}
Since ${\cal H}$ is positive definite and $\bm\neq 0$, one has $a>0$; note that the initial component of $\bm=(m_0,m_1,\ldots, m_q)^\top$ satisfies the strict positivity $m_0 = \E[W/(\varphi\, g'(\mu(\bX;\bbeta^+)))]>0$. There is the following result.

\begin{theo}[Lindholm--W\"uthrich {\cite[Theorems 4.6-4.7]{LW}}] \label{corollary asymptotic simple}
Based on the correct variance choice \eqref{variance optimal} of the cumulant function $\kappa$ and any link $g$
we have strong consistency and asymptotic normality of the SPP estimate
  \begin{equation}\label{SPP asymptotic N}
    \sqrt{n} \left(\widehat{\bbeta}_n^{\rm SPP}- \bbeta^+\right)~ \Longrightarrow~ {\cal N}\left(\b0, \Sigma_{\rm SPP} \right) \qquad \text{ for $n\to \infty$,}
  \end{equation}
with covariance matrix
\begin{equation*}
\Sigma_{\rm SPP}~=~{\cal H}^{-1}+
\frac{\tau^2 -a}{m^2_0}\, \be_1 \be_1^\top,
\end{equation*}
with first basis vector $\be_1=(1,0,\ldots, 0)^\top \in \R^{q+1}$ 
and with $\tau^2\ge a>0$.
\end{theo}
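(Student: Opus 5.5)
Under the correct variance choice \eqref{variance optimal}, Theorem \ref{theorem 2} gives $\sqrt{n}(\widehat{\bbeta}_n-\bbeta^+)\Rightarrow{\cal N}(\b0,{\cal H}^{-1})$, because ${\cal J}={\cal I}={\cal H}$ when $\sigma^2=V$. I would prove the statement in four steps: linearize the MLE, linearize the intercept shift $\widehat{\gamma}$, combine the two, and then compute the covariance.

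First, I linearize the MLE. Using the score \eqref{MLE general}, I write $\sqrt{n}(\widehat{\bbeta}_n-\bbeta^+)={\cal H}^{-1}\frac{1}{\sqrt n}\sum_i \bu_i + o_p(1)$, where
\[
\bu_i=\frac{W_i}{\varphi}\,\frac{Y_i-\mu_i}{V(\mu_i)\,g'(\mu_i)}\,\bX_i
\qquad\text{and}\qquad \mu_i=\mu(\bX_i;\bbeta^+).
\]

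Second, I study the shift $\widehat\gamma_n$ defined by \eqref{intercept shift}. It is the unique root, since $g^{-1}$ is strictly increasing. For consistency, note that $\frac1n\sum_i W_i(g^{-1}\langle\bbeta,\bX_i\rangle - Y_i)$ converges uniformly near $\bbeta^+$ to $\E[W(g^{-1}\langle\bbeta,\bX\rangle-\mu(\bX;\bbeta^+))]$. This limit vanishes at $\bbeta=\bbeta^+$, and its intercept derivative there is $\varphi m_0>0$. Strong consistency of $\widehat{\bbeta}_n$ therefore gives $\widehat\gamma_n\to0$ a.s., hence $\widehat{\bbeta}^{\rm SPP}_n\to\bbeta^+$. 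For the expansion, I Taylor-expand $\frac1n\sum_i \frac{W_i}{\varphi}\bigl(g^{-1}\langle\widehat{\bbeta}+\gamma\be_1,\bX_i\rangle-Y_i\bigr)=0$ around $\bbeta^+$. Using $(g^{-1})'(\eta)=1/g'(\mu)$, this yields
\[
0=-\frac1n\sum_i \frac{W_i}{\varphi}(Y_i-\mu_i) + \bm^\top(\widehat{\bbeta}-\bbeta^+) + m_0\widehat\gamma + o_p(n^{-1/2}).
\]
Writing $v_i=\frac{W_i}{\varphi}(Y_i-\mu_i)$, it follows that $\sqrt n\,\widehat\gamma_n = m_0^{-1}\bigl(\frac{1}{\sqrt n}\sum_i v_i - \bm^\top{\cal H}^{-1}\frac{1}{\sqrt n}\sum_i\bu_i\bigr)+o_p(1)$.

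Third, I combine the two expansions. Then $\sqrt n(\widehat{\bbeta}^{\rm SPP}_n-\bbeta^+)$ equals
\[
\bigl({\cal H}^{-1}-m_0^{-1}\be_1\bm^\top{\cal H}^{-1}\bigr)\frac{1}{\sqrt n}\sum_i\bu_i+m_0^{-1}\be_1\frac{1}{\sqrt n}\sum_i v_i,
\]
up to $o_p(1)$. The multivariate CLT applies to the i.i.d.\ mean-zero pairs $(\bu_i,v_i)$. With $\sigma^2=V$, the moments are:
\begin{itemize}
\item $\mathrm{Cov}(\bu)=\E[\frac{W}{\varphi}\frac{\sigma^2}{V^2 g'^2}\bX\bX^\top]={\cal H}$;
\item $\mathrm{Var}(v)=\E[\frac{W}{\varphi}\sigma^2]=\tau^2$;
\item $\mathrm{Cov}(\bu,v)=\E[\frac{W}{\varphi}\frac{\sigma^2}{Vg'}\bX]=\bm$.
\end{itemize}
The identification of $\mathrm{Cov}(\bu,v)$ with $\bm$ is where the correct variance choice is essential.

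Fourth, I compute the covariance with $A={\cal H}^{-1}-m_0^{-1}\be_1\bm^\top{\cal H}^{-1}$ and $c=m_0^{-1}\be_1$, namely $A{\cal H}A^\top + c\tau^2c^\top + A\bm c^\top + c\bm^\top A^\top$. This gives
\begin{itemize}
\item $A{\cal H}A^\top={\cal H}^{-1}-m_0^{-1}(\be_1\bm^\top{\cal H}^{-1}+{\cal H}^{-1}\bm\be_1^\top)+a\,m_0^{-2}\be_1\be_1^\top$;
\item $A\bm={\cal H}^{-1}\bm - a\,m_0^{-1}\be_1$, so the cross terms contribute $m_0^{-1}({\cal H}^{-1}\bm\be_1^\top+\be_1\bm^\top{\cal H}^{-1}) - 2a\,m_0^{-2}\be_1\be_1^\top$.
\end{itemize}
The mixed terms cancel, and the total is ${\cal H}^{-1}+(\tau^2-a)m_0^{-2}\be_1\be_1^\top$, as claimed.

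For $\tau^2\ge a$, I would use Cauchy–Schwarz: $a=\bm^\top{\cal H}^{-1}\bm$ is the variance of the $L^2$-projection of $v$ onto the span of the components of $\bu$, so $a\le \mathrm{Var}(v)=\tau^2$. Equivalently, this follows from positive semidefiniteness of the joint covariance $\begin{pmatrix}{\cal H}&\bm\\ \bm^\top&\tau^2\end{pmatrix}$.

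The main obstacle is the rigorous joint expansion in the second step, i.e.\ controlling the Taylor remainders uniformly in $(\bbeta,\gamma)$ near $(\bbeta^+,0)$. I would handle this by a uniform law of large numbers for the derivative $\frac1n\sum_i W_i(g^{-1})'\langle\cdot,\bX_i\rangle\bX_i$, under the Gourieroux et al.\ regularity assumptions, together with the $\sqrt n$-consistency from Theorem \ref{theorem 2}.
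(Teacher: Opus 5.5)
Your proposal is correct. The paper imports this result from Lindholm--W\"uthrich without reproving it, but your argument is exactly the technique the paper uses in its proof of Theorem \ref{theorem SC1}: a joint linearization of the MLE score and the balance equation, driven by the joint CLT with block covariance $\begin{pmatrix}{\cal H}&\bm\\ \bm^\top&\tau^2\end{pmatrix}$, followed by the sandwich computation. Your covariance algebra and your Schur-complement argument for $\tau^2\ge a$ check out, and the only point you leave implicit is $a>0$, which follows immediately from ${\cal H}$ being positive definite and $m_0>0$, as noted in the text just before the theorem.
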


\subsection{Optimization under side constraints}
Assume we work under the correct variance choice \eqref{variance optimal} for the selected steep cumulant function $\kappa$ and a general link $g$. We introduce the balance side constraint 
\begin{equation}\label{Gamma}
{\cal C}(\bbeta):=\frac{1}{n}\sum_{i=1}^n \frac{W_i}{\varphi}\left(\mu(\bX_i;\bbeta)-Y_i\right)=0.
\end{equation}
In view of the MLE problem \eqref{MLE 2}, we consider in the constrained case
\begin{equation}\label{MLE 2 constraint}
\widehat{\bbeta}^{\rm sc} 
\,\in \, \argmin_{\bbeta:\, {\cal C}(\bbeta)=0}~
\frac{1}{n} \sum_{i=1}^n \frac{W_i}{\varphi}\,L_\kappa\left(Y_i,\mu(\bX_i;\bbeta)\right).
\end{equation}
We assume that the constrained problem has a local solution in a neighborhood of $\bbeta^+$ and that the usual Karush--Kuhn--Tucker (KKT) conditions hold there. Recall the unconstrained score $\Psi(\bbeta)$ from \eqref{MLE general}, and set for the side constraint
\begin{equation*}
\Gamma(\bbeta)=\nabla_{\bbeta}\, {\cal C}(\bbeta)
=\frac{1}{n}\,\sum_{i=1}^n \frac{W_i}{\varphi\,g'(\mu(\bX_i;\bbeta))}\bX_i.
\end{equation*}
The Lagrange function gives the KKT equations in $(\bbeta, \lambda)$
\begin{equation}
  \begin{pmatrix}
\Psi(\bbeta)+\lambda \Gamma(\bbeta) \\
    {\cal C}(\bbeta)
    \end{pmatrix} =\b0,
\label{eq:KKT}
\end{equation}
with Lagrange multiplier $\lambda\in\R$.

\begin{theo}
\label{theorem SC1}
  Under the correct variance choice \eqref{variance optimal} and any link function $g$
we have asymptotic normality of the constrained estimate
\begin{equation}\label{asymptotics SC1}
\sqrt{n}\left(\widehat{\bbeta}_n^{\rm sc}-\bbeta^+\right)~
\Longrightarrow ~{\cal N}\left(\b0,\Sigma_{\rm sc}\right)\qquad \text{ for $n\to \infty$,}
\end{equation}
with covariance matrix
\begin{equation*}
  \Sigma_{\rm sc}
  = {\cal H}^{-1} + \frac{\tau^2 - a}{a^2}\, {\cal H}^{-1}\bm \bm^\top {\cal H}^{-1},
\end{equation*}
and with $\tau^2 \ge a >0$.
\end{theo}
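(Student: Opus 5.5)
The plan is a standard Z-estimator argument for the stacked KKT system \eqref{eq:KKT}, followed by an explicit block-matrix inversion. First, consistency: under \eqref{first moment assumptions} the population constraint $\E[\frac{W}{\varphi}(\mu(\bX;\bbeta)-Y)]$ vanishes at $\bbeta^+$. The population score $\E[\Psi(\bbeta^+)]=\b0$ as well. Hence $(\bbeta^+,\lambda^+=0)$ solves the population KKT system. Using the regularity conditions of Gourieroux et al.~\cite[Appendix 1]{GourierouxMontfortTrogon} and the assumed local solution near $\bbeta^+$, I would take $(\widehat{\bbeta}_n^{\rm sc},\widehat\lambda_n)\to(\bbeta^+,0)$ a.s. This is the same uniform-law-of-large-numbers argument as behind Theorem \ref{theorem 1}, applied to the stacked estimating equation.

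Second, I would Taylor expand \eqref{eq:KKT} around $(\bbeta^+,0)$. Under the correct variance choice \eqref{variance optimal}, $\nabla_{\bbeta}\Psi(\bbeta^+)\to{\cal H}$ a.s., since the term involving $\mu-Y$ has zero mean. Also $\Gamma(\bbeta^+)\to\bm$ and $\nabla_{\bbeta}{\cal C}=\Gamma$. Because $\lambda\to0$, the term $\lambda\nabla\Gamma$ is negligible. Writing $\Delta=\sqrt n(\widehat{\bbeta}-\bbeta^+)$ and $\ell=\sqrt n\widehat\lambda$, this gives
\begin{equation*}
\begin{pmatrix}{\cal H} & \bm\\ \bm^\top & 0\end{pmatrix}\begin{pmatrix}\Delta\\ \ell\end{pmatrix} = -\sqrt n\begin{pmatrix}\Psi(\bbeta^+)\\ {\cal C}(\bbeta^+)\end{pmatrix}+o_p(1).
\end{equation*}

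Third, I would apply the multivariate CLT to the right-hand side. Its summands are $\frac{W_i}{\varphi}(\mu_i-Y_i)\,(V(\mu_i)^{-1}g'(\mu_i)^{-1}\bX_i,\;1)$. Using $\V(Y|\bX,W)=\varphi\sigma^2/W$ and $\sigma^2=V$, the covariance has blocks ${\cal I}={\cal H}$ for the score part and $\tau^2$ for the constraint part. The cross term is $\E[\frac{W}{\varphi}\sigma^2 V^{-1}g'^{-1}\bX]=\bm$. So the limit covariance is $\Omega=\begin{pmatrix}{\cal H}&\bm\\ \bm^\top&\tau^2\end{pmatrix}$.

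The bordered matrix is invertible because $a>0$. Its $\bbeta$-rows of the inverse are $(P{\cal H}^{-1},\;{\cal H}^{-1}\bm/a)$, with $P=I-{\cal H}^{-1}\bm\bm^\top/a$. Hence $\Delta\Rightarrow{\cal N}(\b0,\Sigma_{\rm sc})$ with
\begin{equation*}
\Sigma_{\rm sc}=P{\cal H}^{-1}{\cal H}{\cal H}^{-1}P^\top+\tfrac{2}{a}\,\mathrm{sym}\bigl(P{\cal H}^{-1}\bm\bm^\top{\cal H}^{-1}\bigr)+\tfrac{\tau^2}{a^2}{\cal H}^{-1}\bm\bm^\top{\cal H}^{-1}.
\end{equation*}
The identity $P{\cal H}^{-1}\bm=0$ kills the middle term. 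A short computation shows $P{\cal H}^{-1}P^\top={\cal H}^{-1}-{\cal H}^{-1}\bm\bm^\top{\cal H}^{-1}/a$. Adding the last term yields the claimed formula.

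Finally, $\tau^2\ge a$ follows from positive semidefiniteness of $\Omega$: its Schur complement $\tau^2-\bm^\top{\cal H}^{-1}\bm$ must be $\ge0$. Equivalently, this is Cauchy--Schwarz on $\E[\frac{W}{\varphi}\sigma^2\cdot\cdot]$. The main obstacle I expect is the rigorous justification of consistency and of the expansion for the constrained/KKT estimator. This needs uniform convergence of the Jacobian of the stacked system near $(\bbeta^+,0)$, and showing $\widehat\lambda_n=O_p(n^{-1/2})$ so that the $\lambda\nabla\Gamma$ term vanishes. I would handle it by treating \eqref{eq:KKT} as one Z-estimator in $(\bbeta,\lambda)$ and invoking the general Z-estimator theory cited (e.g.\ White \cite{White}).
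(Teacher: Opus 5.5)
Your proposal is correct and follows the paper's proof essentially step for step. Both use the joint CLT for $\sqrt{n}\,(\Psi(\bbeta^+),{\cal C}(\bbeta^+))$ with covariance $\bigl(\begin{smallmatrix}{\cal H}&\bm\\ \bm^\top&\tau^2\end{smallmatrix}\bigr)$, a linearization of \eqref{eq:KKT} around $(\bbeta^+,0)$, and an explicit inversion of the bordered matrix, where the cross terms vanish because $({\cal H}^{-1}-{\cal H}^{-1}\bm\bm^\top{\cal H}^{-1}/a)\,\bm=\b0$. Your consistency argument for $(\widehat{\bbeta}^{\rm sc}_n,\widehat\lambda_n)$ and your Schur-complement proof of $\tau^2\ge a$ supply points that the paper's proof leaves implicit.
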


Implementation of this constrained estimation method \eqref{MLE 2 constraint} requires to compute the Hessian in $(\bbeta, \lambda)$ which can be obtained from \eqref{eq:KKT}. This then allows one to find the constrained MLE from a learning sample by using quasi-Newton methods. For fast convergence, one can initialize this algorithm in the unconstrained MLE $\widehat{\bbeta}$.

\medskip

{\Beweis
{\bf Proof of Theorem \ref{theorem SC1}.}
Under the correct variance assumption \eqref{variance optimal}, we have asymptotic normality
\begin{equation}\label{asymtptotic normality 17}
  \sqrt{n}  \begin{pmatrix}
    \Psi(\bbeta^+)\\ {\cal C}(\bbeta^+)
    \end{pmatrix}
~\Longrightarrow~
{\cal N}\left(\b0,
\begin{pmatrix}
{\cal H} & \bm\\
\bm^\top & \tau^2
\end{pmatrix}
\right) \qquad \text{ for $n\to \infty$.}
\end{equation}
Next, we consider a first-order Taylor expansion of the left-hand side of 
\eqref{eq:KKT} around the unconstrained solution $(\bbeta^+,\lambda^+=0)$. This gives us
\begin{equation*}
\sqrt{n}  \begin{pmatrix}
 \Psi(\bbeta^+)+ {\cal H}\left(\bbeta-\bbeta^+\right)+\bm \lambda
\\
 {\cal C}(\bbeta^+) +  \bm^\top \left(\bbeta-\bbeta^+\right) 
    \end{pmatrix} + o_p(1) =\b0,
\end{equation*}
where $o_p(1)$ is vanishing in probability as $n\to \infty$. This can be rewritten as
\begin{eqnarray*}
\sqrt{n}  
 \begin{pmatrix}
\bbeta-\bbeta^+  
\\
 \lambda
\end{pmatrix}
&=&
- \sqrt{n}\begin{pmatrix}
 {\cal H} & \bm 
\\
 \bm^\top & 0
\end{pmatrix}^{-1}  \begin{pmatrix}
 \Psi(\bbeta^+) 
\\
 {\cal C}(\bbeta^+)
\end{pmatrix}
+ o_p(1)
\\&=&    
- \sqrt{n}\begin{pmatrix}
 {\cal H}^{-1} - \frac{{\cal H}^{-1}\bm \bm^\top {\cal H}^{-1}}{a} & \frac{{\cal H}^{-1}\bm }{a} 
\\\frac{ \bm^\top {\cal H}^{-1}}{a} & -\frac{1}{a}
\end{pmatrix}  
\begin{pmatrix}
 \Psi(\bbeta^+) 
\\
 {\cal C}(\bbeta^+)
\end{pmatrix}
+ o_p(1).
\end{eqnarray*}
Focusing on the regression parameter components, we get
\begin{eqnarray*}
  \sqrt{n}\left(\bbeta-\bbeta^+\right)
  &=&
      -\left({\cal H}^{-1} - \frac{{\cal H}^{-1}\bm \bm^\top {\cal H}^{-1}}{a}\right)
      \sqrt{n}\Psi(\bbeta^+)
  + \frac{{\cal H}^{-1}\bm }{a}\,\sqrt{n}{\cal C}(\bbeta^+) + o_p(1).
\end{eqnarray*}
Next, applying the asymptotic normality \eqref{asymtptotic normality 17} in the previous identity to $\widehat{\bbeta}_n^{\rm sc}$, as $n \to \infty$, gives us the
claim with covariance matrix
\begin{equation*}
  \Sigma_{\rm sc}
  = {\cal H}^{-1} + \frac{\tau^2 - a}{a^2}\, {\cal H}^{-1}\bm \bm^\top {\cal H}^{-1}.
\end{equation*}
This completes the proof.
\EndProof}

\section{Forecast accuracy}
The main result of Lindholm--W\"uthrich \cite{LW} was that one cannot give a general preference to either the QMLE or the SPP method w.r.t.~forecast accuracy, because their ranking is problem and situation specific. To measure {\it forecast accuracy}, Lindholm--W\"uthrich \cite{LW} were starting from the average Kullback--Leibler (KL) divergence  to measure the difference from an estimated model $\mu(\bX ;\widetilde{\bbeta})$ to the true model $\mu(\bX; \bbeta^+)$ that belong to the same GLM-EDF class but differ in their parameters $\widetilde{\bbeta}$ and $\bbeta^+$.
Averaging over the population distribution $\bX \sim \p$, one can approximate this average KL divergence by, see Lindholm--W\"uthrich \cite[Lemma 4.8]{LW},
\begin{equation}\label{KL trace}
\E\left[D_{\rm KL}(\bbeta^+\| \widetilde{\bbeta})\right]
~\approx~
\frac1{2n}\tr\left({\cal H}\widetilde{\Sigma}\right),
\end{equation}
assuming that $\widetilde{\bbeta}=\widetilde{\bbeta}_n$ is  
asymptotically normal for $\bbeta^+$ with rate $\sqrt{n}$ and asymptotic covariance matrix $\widetilde{\Sigma}$ for $n \to \infty$.

Approximation  \eqref{KL trace} gives the leading term of the asymptotic behavior as $n \to \infty$, the rate is  given by $(2n)^{-1}$ and the leading constant is determined by $\tr({\cal H}\widetilde{\Sigma})$. Measuring forecast accuracy of the different presented model estimation methods, we thus compute the traces of the different asymptotic covariance matrices, and the smaller the trace the better the (asymptotic) approximation to the true model. This gives a precise meaning to what we coin {\it superior} in the introduction Section \ref{sec: Introduction}.

In Lindholm--W\"uthrich \cite{LW} it was shown by examples that one cannot generally rank the traces $\tr({\cal H}{\Sigma}_{\rm QMLE})$ of the QMLE approach \eqref{asymptotic QMLE} and $\tr({\cal H}{\Sigma}_{\rm SPP})$ of the SPP approach \eqref{SPP asymptotic N}. Thus, we cannot rank these two methods w.r.t.~\eqref{KL trace}.
However, the next theorem proves that the constrained case outperforms both of them in terms of the resulting trace.

\begin{theo}\label{theo main result}
Consider the asymptotic covariance matrices of the  QMLE approach \eqref{asymptotic QMLE}, the SPP approach \eqref{SPP asymptotic N} and the constrained case \eqref{asymptotics SC1}.
We have
\begin{equation}\label{ranking obtained}
q+1
~\le~
\tr\left({\cal H}\Sigma_{\rm sc}\right)
~\le~ \min \Big\{ \tr\left({\cal H}\Sigma_{\rm QMLE}\right), \,
\tr\left({\cal H}\Sigma_{\rm SPP}\right) \Big\}.
\end{equation}
\end{theo}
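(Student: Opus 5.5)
The plan is to reduce the first inequality to an explicit trace computation, and to get the upper bound from a single variance-minimality argument that covers both QMLE and SPP. For the lower bound, Theorem \ref{theorem SC1} gives
\begin{equation*}
\tr({\cal H}\Sigma_{\rm sc}) = \tr(I_{q+1}) + \frac{\tau^2-a}{a^2}\,\bm^\top{\cal H}^{-1}{\cal H}{\cal H}^{-1}\bm = q+1+\frac{\tau^2-a}{a},
\end{equation*}
which is at least $q+1$ because $\tau^2\ge a>0$.

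For SPP a direct computation is also available. Theorem \ref{corollary asymptotic simple} gives $\tr({\cal H}\Sigma_{\rm SPP}) = q+1+(\tau^2-a)\,{\cal H}_{00}/m_0^2$, with ${\cal H}_{00}=\be_1^\top{\cal H}\be_1$. It therefore suffices to show $m_0^2\le a\,{\cal H}_{00}$. This follows from Cauchy--Schwarz applied to $m_0=\be_1^\top\bm=({\cal H}^{1/2}\be_1)^\top({\cal H}^{-1/2}\bm)$.

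For QMLE I will not compare traces directly. Instead I aim for the stronger psd ordering $\Sigma_{\rm QMLE}\ge\Sigma_{\rm sc}$, which gives the trace inequality because ${\cal H}>0$. The tool is an asymptotically linear representation. Set $S=\sqrt n\,\Psi(\bbeta^+)$ (efficient score under \eqref{variance optimal}, covariance ${\cal H}$) and $C=\sqrt n\,{\cal C}(\bbeta^+)$, with joint law \eqref{asymtptotic normality 17}. A Taylor expansion of the QMLE score gives $T_Q:=\sqrt n(\widehat\bbeta^{\rm QMLE}_n-\bbeta^+) = -{\cal J}_g^{-1}\sqrt n\,\Psi_g(\bbeta^+)+o_p(1)$. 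Two properties of $T_Q$ are needed.
\begin{enumerate}
\item[(i)] $\mathrm{Cov}(T_Q,S)=-I$. Per observation, the covariance of the $\kappa_g$-score with the $\kappa$-score is $\E[\frac{W}{\varphi}\frac{\sigma^2}{V_g\,\sigma^2\,g'^2}\bX\bX^\top]={\cal J}_g$.
\item[(ii)] $\bm^\top T_Q=-C+o_p(1)$. This comes from linearizing the exact balance property of Corollary \ref{canonical link cor}, ${\cal C}(\widehat\bbeta^{\rm QMLE})=0$, around $\bbeta^+$, using $\Gamma(\bbeta^+)\to\bm$.
\end{enumerate}
The SPP estimator satisfies (i) and (ii) as well, being the efficient MLE plus an intercept shift enforcing \eqref{intercept shift}. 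So the same argument recovers the SPP comparison and can serve as a cross-check.

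Next, write $T_{\rm sc}=-({\cal H}^{-1}-{\cal H}^{-1}\bm\bm^\top{\cal H}^{-1}/a)S+{\cal H}^{-1}\bm\,C/a$, the linear representation from the proof of Theorem \ref{theorem SC1}. It also satisfies (i) and (ii). Put $D=T-T_{\rm sc}$ for any $T$ satisfying (i) and (ii). Then $\mathrm{Cov}(D,S)=0$ and $\bm^\top D=0$, so
\begin{equation*}
X:=\mathrm{Cov}(D,T_{\rm sc})=\mathrm{Cov}(D,C)\,\bm^\top{\cal H}^{-1}/a .
\end{equation*}
Using $C=-\bm^\top T_{\rm sc}$ gives $\mathrm{Cov}(D,C)=-X\bm$, hence $X=-X\bm\bm^\top{\cal H}^{-1}/a$. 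Multiplying on the right by $\bm$ yields $X\bm=-X\bm$, so $X\bm=0$ and therefore $X=0$. Consequently $\mathrm{Cov}(T)=\Sigma_{\rm sc}+\mathrm{Cov}(D)\ge\Sigma_{\rm sc}$.

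The main obstacle I expect is making the asymptotic linearity of $T_Q$ and property (ii) rigorous. This needs consistency (Theorem \ref{theorem 1}), uniform convergence of $\Gamma$ and of the QMLE Hessian near $\bbeta^+$, and a justification that the covariance identity (i) holds with joint asymptotic normality of $(T_Q,S,C)$, all under the technical conditions of Gourieroux et al. The algebra after that point is routine.
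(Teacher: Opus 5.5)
Your lower bound and your SPP comparison are correct and coincide with the paper's argument. The QMLE part has a genuine gap. You claim that every $T$ satisfying (i) and (ii) obeys $\mathrm{Cov}(T)\ge\Sigma_{\rm sc}$ in the psd order. That is false, and so is the special case $\Sigma_{\rm QMLE}\ge\Sigma_{\rm sc}$ you are aiming for.

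Your own remark that SPP satisfies (i)--(ii) already refutes the general claim:
\[
\Sigma_{\rm SPP}-\Sigma_{\rm sc}=(\tau^2-a)\Bigl(\frac{\be_1\be_1^\top}{m_0^2}-\frac{{\cal H}^{-1}\bm\bm^\top{\cal H}^{-1}}{a^2}\Bigr).
\]
This is a difference of two rank-one matrices, hence indefinite whenever $\tau^2>a$ and ${\cal H}^{-1}\bm$ is not parallel to $\be_1$. Concretely, for any $\boldsymbol z\perp\be_1$ you have $\boldsymbol z^\top\Sigma_{\rm SPP}\boldsymbol z=\boldsymbol z^\top{\cal H}^{-1}\boldsymbol z$. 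By contrast, $\boldsymbol z^\top\Sigma_{\rm sc}\boldsymbol z$ exceeds this by $(\tau^2-a)(\boldsymbol z^\top{\cal H}^{-1}\bm)^2/a^2$.

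For QMLE one can likewise build three-point designs, e.g.\ covariate values $0,1,2$. In such a design some $\boldsymbol z^\top\bbeta$ is estimated by QMLE with the unconstrained efficient variance $\boldsymbol z^\top{\cal H}^{-1}\boldsymbol z$, which is strictly below $\boldsymbol z^\top\Sigma_{\rm sc}\boldsymbol z$. Only the ${\cal H}$-weighted trace is ordered, which is exactly what the theorem asserts.

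The algebra hides this because of a sign. Linearizing \eqref{eq:KKT} gives $T_{\rm sc}=-PS-{\cal H}^{-1}\bm\,C/a$, with $P={\cal H}^{-1}-{\cal H}^{-1}\bm\bm^\top{\cal H}^{-1}/a$. The plus sign in the displayed proof of Theorem \ref{theorem SC1} is a slip; it is harmless there because $P\bm=\b0$ kills the cross term.

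With your plus sign, $\bm^\top T_{\rm sc}=+C$ and $\mathrm{Cov}(T_{\rm sc},S)=-I+2{\cal H}^{-1}\bm\bm^\top/a$, so neither (i) nor (ii) holds for it. Your identity $X\bm=-X\bm$ comes from computing $X$ with the plus sign but then inserting $C=-\bm^\top T_{\rm sc}$. With consistent signs you get $X=X\bm\bm^\top{\cal H}^{-1}/a$, which is vacuous. In fact $X\neq0$ in general: for SPP, $X=(\tau^2-a)(\be_1/m_0-{\cal H}^{-1}\bm/a)\bm^\top{\cal H}^{-1}/a$.

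Your framework is salvageable if you only ask for orthogonality after applying $\tr({\cal H}\,\cdot)$. Since $X=-\mathrm{Cov}(D,C)\,\bm^\top{\cal H}^{-1}/a$, you get $\tr({\cal H}X)=-\mathrm{Cov}(\bm^\top D,C)/a=0$. Hence $\tr({\cal H}\,\mathrm{Cov}(T))=\tr({\cal H}\Sigma_{\rm sc})+\tr({\cal H}\,\mathrm{Cov}(D))\ge\tr({\cal H}\Sigma_{\rm sc})$.

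That corrected version is a legitimate alternative to the paper's proof, with a different trade-off:
\begin{itemize}
\item It treats QMLE and SPP in one stroke. It also shows ${\cal H}$-trace optimality of SC among all regular asymptotically linear estimators obeying the balance constraint.
\item It needs the asymptotic linear representation of the QMLE. The paper instead works purely with the closed-form matrices: it splits ${\cal I}_g={\cal J}_g{\cal H}^{-1}{\cal J}_g+{\cal B}_g$ and bounds $\tr({\cal M}_g)$ below by its Rayleigh quotient at $\bu={\cal H}^{-1/2}{\cal J}_g\be_1$.
\end{itemize}
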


\medskip

{\bf Discussion of Theorem \ref{theo main result}:}
\begin{itemize}
\item 
The lower bound $q+1$ in \eqref{ranking obtained} corresponds to the unconstrained MLE $\widehat{\bbeta}$, given in \eqref{MLE 2}, which gives the most accurate forecasts, but which does not generally satisfy the balance property. 
Theorem \ref{theo main result} then says that we should give preference to the constrained MLE $\widehat{\bbeta}^{\rm sc}$, given in 
\eqref{MLE 2 constraint}, as it outperforms the QMLE and the SPP methods.
\begin{itemize}
\item The constrained case outperforms the SPP method because the latter only corrects in the intercept direction whereas the constrained case corrects in the optimal correction.
  \item The QMLE solves the same problem as the constrained case -- the side constraint is not binding in the QMLE because the balance property is fulfilled -- however, the QMLE solves this problem in the wrong likelihood geometry w.r.t.~the variance property of the responses.
  \end{itemize}
\item There is a caveat in these results, namely, Theorems \ref{corollary asymptotic simple} and \ref{theorem SC1} require that we work under the correct variance assumption  \eqref{variance optimal}. If the correct variance assumption is unknown, one can try to infer it by an iterative estimation scheme, as proposed
  in Delong--W\"uthrich \cite{DelongW}.  
\end{itemize}

\bigskip

{\Beweis
{\bf Proof of Theorem \ref{theo main result}.}
We start by computing the trace of the constrained case
\begin{eqnarray}\nonumber
\tr({\cal H}\Sigma_{\rm sc})
&=&
\tr\left({\cal H} \left({\cal H}^{-1} + \frac{\tau^2 - a}{a^2}\, {\cal H}^{-1}\bm \bm^\top {\cal H}^{-1}\right) \right)
\\&=&\label{used later again}
\tr\left(\mathds{1} + \frac{\tau^2 - a}{a^2}\, \bm \bm^\top {\cal H}^{-1}\right) ~=~ (q+1)+\frac{\tau^2-a}{a}.
\end{eqnarray}
Next, we compute the trace of the SPP case
\begin{equation*}
\tr({\cal H}\Sigma_{\rm SPP})
=
\tr\left({\cal H} \left({\cal H}^{-1} + \frac{\tau^2 - a}{m_0^2}\, \be_1 \be_1^\top \right) \right)
=
\tr\left(\mathds{1} + \frac{\tau^2 - a}{m_0^2}\, {\cal H}_{1,1}\right)
=(q+1) + (\tau^2-a)\,\frac{{\cal H}_{1,1}}{m_0^2}.
\end{equation*}
We define the scalar product in the ${\cal H}$-metric in $\R^{q+1}$ by 
$\langle \boldsymbol{b}, \boldsymbol{a} \rangle_{\cal H} = 
\boldsymbol{b}^\top {\cal H} \boldsymbol{a}$ for $\boldsymbol{b}, \boldsymbol{a}\in \R^{q+1}$.
The Cauchy--Schwarz inequality then provides us with
\begin{eqnarray*}
m_0^2&=&(\be_1^\top \bm)^2~=~(\be_1^\top {\cal H}{\cal H}^{-1} \bm)^2
~=~\left|\langle \be_1, {\cal H}^{-1} \bm\rangle_{\cal H}\right|^2
\\&\le&
\langle \be_1, \be_1\rangle_{\cal H}\,
\langle {\cal H}^{-1} \bm, {\cal H}^{-1} \bm\rangle_{\cal H}~=~
\left(e_1^\top {\cal H}\be_1\right)\left(\bm^\top {\cal H}^{-1}\bm\right)
={\cal H}_{1,1}\,a.
\end{eqnarray*}
This proves $\tr({\cal H}\Sigma_{\rm sc}) \le \tr({\cal H}\Sigma_{\rm SPP})$, because ${\cal H}_{1,1}/m_0^2\ge 1/a$.

\medskip

Now, we turn our attention to the QMLE case. For the selected link function $g$ and the matching cumulant function $\kappa_g$, we have
\begin{equation} \label{first formulation}
{\cal J}_g=\E \left[
    \frac{W}{\varphi}\, \frac{1}{V_g(\mu(\bX;\bbeta^+))}\,
\left(\frac{1}{g'(\mu(\bX;\bbeta^+))}\right)^2 \bX\bX^\top
     \right]
     =\E \left[
    \frac{W}{\varphi}\, \frac{1}{g'(\mu(\bX;\bbeta^+))} \bX\bX^\top
     \right].
\end{equation}
From this we obtain
\begin{equation*}
     {\cal J}_g \be_1 =
\E \left[\frac{W}{\varphi}\, \frac{1}{g'(\mu(\bX;\bbeta^+))}\, \bX\right]= \bm.
\end{equation*}
This allows us to rewrite 
\begin{equation*}
a=\bm^\top {\cal H}^{-1}\bm=\be_1^\top {\cal J}_g{\cal H}^{-1}{\cal J}_g\be_1,
\end{equation*}
and applying the same computation \eqref{first formulation} to ${\cal I}_g$
\begin{equation*}
\tau^2-a= \be_1^\top {\cal I}_g \be_1 - \be_1^\top {\cal J}_g{\cal H}^{-1}{\cal J}_g\be_1= \be_1^\top \left({\cal I}_g  -  {\cal J}_g{\cal H}^{-1}{\cal J}_g\right)\be_1
= \be_1^\top{\cal B}_g\be_1
 \ge 0,
\end{equation*}
where we have set ${\cal B}_g={\cal I}_g  -  {\cal J}_g{\cal H}^{-1}{\cal J}_g$. This matrix is positive semit-definite; Gourieroux et al.~\cite[Property 5]{GourierouxMontfortTrogon}. Using \eqref{used later again} we have
\begin{equation*}
\tr({\cal H}\Sigma_{\rm sc})
= (q+1)+\frac{\tau^2-a}{a}
= (q+1)+\frac{\be_1^\top {\cal B}_g \be_1}{\be_1^\top   {\cal J}_g{\cal H}^{-1}{\cal J}_g\be_1}.
\end{equation*}
For the QMLE case we have
\begin{eqnarray*}
\tr({\cal H}\Sigma_{\rm QMLE})
&=&
\tr\left({\cal H}{\cal J}_g^{-1}{\cal I}_g{\cal J}_g^{-1}\right)
~=~
\tr\left({\cal H}{\cal J}_g^{-1}\left({\cal J}_g{\cal H}^{-1}{\cal J}_g+{\cal B}_g\right){\cal J}_g^{-1}\right)
\\&=&(q+1)+
\tr\left({\cal H}{\cal J}_g^{-1}{\cal B}_g{\cal J}_g^{-1}\right)
~=~
(q+1)+\tr\left({\cal H}^{1/2}{\cal J}_g^{-1}{\cal B}_g{\cal J}_g^{-1}{\cal H}^{1/2}\right).
\end{eqnarray*}
We introduce the positive semi-definite matrix ${\cal M}_g={\cal H}^{1/2}{\cal J}_g^{-1}{\cal B}_g{\cal J}_g^{-1}{\cal H}^{1/2}$.
We set vector  $\bu={\cal H}^{-1/2}{\cal J}_g \be_1$. This implies
\begin{equation}\label{to be used 31}
\bu^\top {\cal M}_g \bu
=
\bu^\top \left({\cal H}^{1/2}{\cal J}_g^{-1}{\cal B}_g{\cal J}_g^{-1}{\cal H}^{1/2}\right) \bu = \be_1^\top {\cal B}_g \be_1,
\end{equation}
and
\begin{equation}\label{to be used 32}
\bu^\top  \bu
=\be_1^\top {\cal J}_g{\cal H}^{-1}{\cal J}_g \be_1.
\end{equation}
The trace of  matrix ${\cal M}_g$ dominates its biggest eigenvalue giving us the inequality
\begin{equation*}
\tr({\cal M}_g)\ge \frac{\bu^\top {\cal M}_g \bu}{\bu^\top \bu}.
\end{equation*}
Using the definition of ${\cal M}_g$ and \eqref{to be used 31}-\eqref{to be used 32}, the latter is equivalent
\begin{equation*}
\tr\left({\cal H}{\cal J}_g^{-1}{\cal B}_g{\cal J}_g^{-1}\right)=
\tr({\cal M}_g)\ge \frac{\bu^\top {\cal M}_g \bu}{\bu^\top \bu}= \frac{\be_1^\top {\cal B}_g \be_1}{\be_1^\top {\cal J}_g{\cal H}^{-1}{\cal J}_g \be_1}=\frac{\tau^2-a}{a}.
\end{equation*}
This completes the proof.
\EndProof}

\section{Risk sharing}
\label{sec: Risk sharing}
We have started our discussion by emphasizing the importance of the balance property in insurance pricing to ensure in-sample global unbiasedness, being an important aspect of calibrated pricing schemes. In particular, the balance property can be verified purely data driven, and it does not require any knowledge of the true expected loss. Recall, the model fitting procedure satisfies the balance property, if for $\p$-a.e.~realization of the learning sample ${\cal D}=(Y_i,\bX_i,W_i)_{i=1}^n$, the resulting fitted model $\widehat{\mu}_{\cal D}(\cdot)$ satisfies the equality
\begin{equation}\label{balance property formula 2}
\sum_{i=1}^n W_i \,\widehat{\mu}_{\cal D}(\bX_i) = 
\sum_{i=1}^n W_i\,Y_i.
\end{equation}
This is a property of the fitted model $\widehat{\mu}_{\cal D}(\cdot)$, which is then going to be deployed for actuarial pricing of the next calendar year -- also called out-of-time in the sense of out-of-sample, in contrast to in-sample. The prices $W_t \,\widehat{\mu}_{\cal D}(\bX_t)$  are, thus, set ex-ante for the next calendar year and, generally, the future realized claims $Y_t$ will differ on the portfolio level
\begin{equation}\label{ex-ante future}
\sum_{t=1}^m W_t \,\widehat{\mu}_{\cal D}(\bX_t) \neq 
\sum_{t=1}^m W_t\,Y_t,
\end{equation}
if $(Y_t,\bX_t,W_t)_{t=1}^m$ denotes the portfolio observation of the next calendar year.
These considerations concern classical ex-ante actuarial pricing within the insurance industry, and possible shortfalls in \eqref{ex-ante future} need to be covered by the risk capital buffer.

One stream of recent actuarial research has been focusing on peer-to-peer insurance where the scheme participants share the experienced total loss ex-post in a predefined risk sharing scheme (total loss allocation mechanism); see Denuit--Dhaene \cite[Definition 3.1]{DD}. In particular, assume that $(Y_i)_{i=1}^n$ are the individual losses of the scheme participants $(\bX_i,W_i)_{i=1}^n$. According to the terms of the risk sharing scheme, find measurable functions $h_i(\cdot)$ such that
\begin{equation}\label{actual loss observed}
  \sum_{i=1}^n h_i\left((Y_i, \bX_i, W_i)_{i=1}^n\right) = \sum_{i=1}^n W_i Y_i=:S.
\end{equation}
This indicates the close connection of risk sharing to the balance property \eqref{balance property formula 2}.
Risk sharing schemes can, e.g., be found as a conditional mean risk sharing (CMRS) rule
\begin{equation*}
  g_i(S) = \E \left[\left. W_iY_i \right| S \right],
  \end{equation*}
  and a stream of literature studies its Pareto-optimality, e.g., in terms of convex orders; we refer to Denuit--Dhaene \cite{DD}, Denuit--Robert \cite{DR} and the literature therein. Moreover, Jiao et al.~\cite{Ruodu} give an axiomatic foundation to the CMRS rule, and computational aspects of the CMRS rule are studied in Blier-Wong \cite{BlierWong}, and we also refer to the additional literature therein.

Another interesting aspect is provided by Chong et al.~\cite{Chong}, who combine ex-ante capital allocation (from a risk management perspective) and ex-post risk sharing (from a peer-to-peer perspective). By randomizing the ex-ante capital allocation principle, these authors provide a new view on ex-post risk sharing. Denote by $K_i$ the capital buffer of risk $1\le i \le n$. Chong et al.~\cite[formula (5)]{Chong} study the ex-ante capital allocation rule under a given loss function $L$ (we adapt the case weight scaling to our EDF view)
\begin{equation}\label{capital allocation 0}
  (K^\ast_i)_{i=1}^n~\in ~
  \argmin_{(K_i)_{i=1}^n:\, \sum_{i=1}^n K_i = K}~ \sum_{i=1}^n W_i\, \E^{\mathbb{Q}_i} \left[L\left(Y_i, K_i/W_i\right)\right],
\end{equation}
for a predefined capital level $K$, deterministic case weights $W_i>0$, and where the probability measures $\mathbb{Q}_i$ capture the risk aversion and the preferences of the individual scheme participants. To turn this into a risk sharing rule, the authors transform \eqref{capital allocation 0} into a parametrized version. Let $(K(\theta), W_i(\theta), \mathbb{Q}_i^\theta)_{1\le i \le n}$ be a parametrized version of $(K,W_i,\mathbb{Q}_i)_{1\le i \le n}$ with parameter $\theta \in [0,1]$, and assume $\theta \mapsto K(\theta)$ is right-invertible. This gives the $\theta$-parametrized solutions
\begin{equation}\label{Runhuan}
  (K^\ast_i)_{i=1}^n(\theta)~\in~
  \argmin_{(K_i)_{i=1}^n:\, \sum_{i=1}^n K_i = K(\theta)}~ \sum_{i=1}^n W_i(\theta)\, \E^{\mathbb{Q}^\theta_i} \left[L\left(Y_i, K_i/W_i(\theta)\right)\right].
\end{equation}
That is, for every capital level $K(\theta)$, there is an optimal ex-ante capital allocation rule $(K^\ast_i)_{i=1}^n(\theta)$. Turning this into an ex-post risk sharing rule requires linking the capital level $K(\theta)$ to the actual loss $S$ defined in \eqref{actual loss observed}. This is obtained by selecting a $[0,1]$-valued random variable $\Theta$, such that $K(\Theta)=S$, $\p$-a.s. This then induces the ex-post risk sharing rule $(K^\ast_i)_{i=1}^n(\Theta)$ at the experienced aggregate loss level $S$.
For the square loss function $L$, and the weights $W_i(\theta)\equiv W_i$ and probability measures
$\mathbb{Q}_i^\theta\equiv \mathbb{Q}_i$ not depending on $\theta$, the $\Theta$-randomized version of \eqref{Runhuan}  gives exactly the quota-sharing risk sharing rule; see Chong et al.~\cite[fomula (12)]{Chong}.

It is important to emphasize that this ex-post risk sharing rule is a two-step procedure; see Chong et al.~\cite[Remark 4.1]{Chong}: First, solve the parametrized problem \eqref{Runhuan} and, second, set the parameter $\theta$ to the observed loss level $K(\theta)=S$. This provides {\it regularization}. If we would try to solve the problem in one step by replacing the side constraint in \eqref{Runhuan} by $\sum_{i=1}^n K_i = S$, we would get the trivial solution $K^\ast_i = W_i Y_i$ for any strictly consistent loss function $L$ for mean estimation. This is because regularization is missing if we were allowed to select any $\sigma( (Y_i)_{i=1}^n)$-measurable solution. The constrained MLE case \eqref{MLE 2 constraint} addresses regularization in a different way, namely, we assume that all insurance policyholders $1\le i \le n$ are characterized by covariates $\bX_i$ that account for the corresponding heterogeneity in their risks. Based on this, we select a (parametrized) class of regression models $\{\bX\mapsto\mu(\bX; \bbeta)\}_{\bbeta}$ on the covariates $\bX$, and this class of regression models restricts the solution space. Optimization problem \eqref{MLE 2 constraint} can then be seen as the empirical version of the resulting risk sharing rule.

\medskip

We mention some differences to Chong et al.~\cite{Chong} and discuss related items:
\begin{itemize}
\item Our loss function $L_\kappa$ is a deviance loss function, i.e., our objective function promotes at approximating the conditional expectations of $Y_i$ as good as possible, given $\bX_i$, subject
  to the balance side constraint \eqref{Gamma}. Whereas Chong et al.~\cite{Chong} consider the two examples of the square loss function and the absolute value loss function:
  \begin{itemize}
  \item  The first one is strictly consistent for mean estimation, in fact, it is a deviance loss function, but potentially not sharing the same properties as the responses; see Section \ref{sec: Quasi log-likelihood} on QMLE. Consequently, on finite samples one does not receive the optimal predictive model on average, see Theorem \ref{theorem 2}.
  \item The second one is strictly consistent for median estimation, thus, a pinball loss, and the resulting risk sharing rule is targeting medians.
  \end{itemize}
\item Chong et al.~\cite{Chong} allow for heterogeneity by allowing the scheme participants to have different risk aversions and preferences through the measures $\mathbb{Q}_i$, and the claims $Y_i$ can have different distributions. Our case \eqref{MLE 2 constraint} does not consider risk aversion, though it might be implemented by a weighting scheme through changes of measure via Radon--Nikod\'ym derivatives. However, these changes of measure would not result in the best possible predictive model, because the optimization would not take place under the real world measure $\p$ (generating the losses $Y_i$). On the other hand, \eqref{MLE 2 constraint} allows for heterogeneity by considering all available insurance policyholder information $\bX_i$, which constrains the solution space to the measurable functions of $(\bX_i)_{i=1}^n$. This prevents from the trivial risk sharing rule $K^*_i = W_i Y_i$, unless the responses are measurable w.r.t.~the covariates (which typically is not the case, because otherwise insurance would not be necessary as the claims are perfectly predictable knowing the covariates).
\item Following up on the previous item, if we drop the balance side constraint \eqref{Gamma} and if we optimize the non-empirical version over all measurable functions on $(\bX_i)_{i=1}^n$, assuming independence of policyholders under the population distribution $\p$, we obtain the true regression function under the expected deviance loss (this precisely uses the strict consistency for mean estimation of $L_\kappa$)
  \begin{equation*}
\mu(\bX_i) = \E \left[\left. Y_i\right|\bX_i \right].
\end{equation*}
Thus, our minimization tries to separate the systematic part $\mu(\bX_i)$ from the idiosyncratic (irreducible risk) part $\varepsilon_i=Y_i-\mu(\bX_i)$, and only the systematic part is charged subject to the balance side constraint. This balance side constraint is allocated to the individual pool participants such that the risk sharing perturbations are as close as possible to the systematic part w.r.t.~the selected expected deviance loss. This is a different view on ``fairness'' of ex-post risk sharing rules.

\item Much of the risk sharing literature does not address the fact that typically the true data generating model is unknown, i.e., there is an additional statistical modeling problem involved. Our approach solves this elegantly by considering the optimal constrained solution \eqref{MLE 2 constraint} of model fitting. This paper presents the GLM regression example, but naturally this works for any family of regression functions.

\item Following up on the previous item, if we work with a GLM under the canonical link choice, the MLE fitted model automatically fulfills the balance property, see Theorem \ref{canonical link}. Thus, in this case the ex-post risk sharing rule precisely charges the {\it estimated systematic part} -- this is a pretty strong result which supports the above fairness view! More generally, this can also be obtained within neural networks, see W\"uthrich \cite{Wbalance}, and gradient boosting machines (GBMs) if one works with canonical link functions that match the selected deviance loss. If one does not work in this canonical setting, then penalized (constrained) M-estimation can be applied.

\item This paper was studying the balance property \eqref{balance property formula} which can be seen as a global balance. It is well-known that if one considers a GLM with canonical link, then the balance property holds on every level of a categorical covariate; see Denuit et al.~\cite{DenuitCharpentierTrufin} and W\"uthrich--Merz \cite[formula (5.23)]{WM2023}. Thus, in fact, the ex-post risk sharing takes place on a (local) categorical covariate level. This also opens up the discussion about (algorithmic) fairness, multi-balance and multi-calibration; see Denuit et al.~\cite{Multi}.
  
\end{itemize}

This short section tried to give a high-level overview of some aspects on risk sharing connected to the balance property, and we hope that it stimulates more research in this direction.

\section{Conclusions}
This note revisited the balance property which is a crucial feature in actuarial pricing.
We presented three methods of balance correction in case the fitted models do not automatically fulfill this property: 1) quasi maximum likelihood estimation (QMLE) method, 2) simple post-processing (SPP) method, and 3) optimization under side constraint (SC) method. Within a generalized linear modeling (GLM) framework, using deviance losses for maximum likelihood estimation (MLE), it turns out that the SC method is superior over the other two methods in terms of prediction accuracy (where prediction accuracy was defined to be the asymptotic KL-trace in the infinite sample limit). Therefore, if the balance property is an essential feature of the fitted model, the recommendation is to use the SC method.

In the second part of this note, we have linked the SC method to ex-post risk sharing. Our proposal provides an ex-post risk sharing rule that uses the available policyholder information in an optimal way. If we work with a GLM under the canonical link choice, we can even coin this ex-post risk sharing solution 'fair' because it precisely charges the price of the estimated systematic effects, thus, it balances the estimated idiosyncratic (irreducible risk) part (being shared across the risk sharing pool).

The balance property is an in-sample property, and for out-of-time forecasting of future calendar years, further corrections may be necessary in case of non-stationarity, which is coined concept drift in actuarial pricing; see Brauer et al.~\cite{BrauerMenzel} for a discussion of concept drifts.

Finally, our notion of superiority and prediction accuracy relies on an asymptotic behavior under a known variance structure. Both, understanding a misspecification of the variance structure and finite sample results would be of great practical interest.

\bigskip

{\small 
\renewcommand{\baselinestretch}{.51}
}

\end{document}